
\documentclass[11pt]{article}
\usepackage{amsmath,amssymb,amsthm,amscd}

\setlength{\topmargin}{-0.5cm}
\setlength{\textheight}{22cm}
\setlength{\evensidemargin}{0.5cm}
\setlength{\oddsidemargin}{0.5cm}
\setlength{\textwidth}{15cm}

\newtheorem{theorem}{Theorem}
\newtheorem{lemma}[theorem]{Lemma}

\newtheorem{corollary}[theorem]{Corollary}

\theoremstyle{plain}

\theoremstyle{definition}

\newtheorem{remark}[theorem]{Remark}


\renewcommand{\labelenumi}{\textup{(\theenumi)}}

\title{On one-sided topological conjugacy of topological Markov shifts and gauge actions on Cuntz--Krieger algebras
 \\
}
\author{Kengo Matsumoto \\
Department of Mathematics \\
Joetsu University of Education \\
Joetsu, 943-8512, Japan
}

\begin{document}

\date{2020, Oct 12}

\maketitle

\date{}

\def\det{{{\operatorname{det}}}}

\begin{abstract}
We will characterize  topological conjugacy classes of one-sided topological Markov shifts
in terms of the associated Cuntz--Krieger algebras and its gauge actions with potentials.
\end{abstract}



2010 {\it Mathematics Subject Classification}:
 Primary 37A55, 46L55; Secondary 46L35, 37B10.

{\it Keywords and phrases}:
topological Markov shift, topological conjugacy,  Cuntz-Krieger algebra, gauge action.

\newcommand{\Ker}{\operatorname{Ker}}
\newcommand{\sgn}{\operatorname{sgn}}
\newcommand{\Ad}{\operatorname{Ad}}
\newcommand{\ad}{\operatorname{ad}}
\newcommand{\orb}{\operatorname{orb}}

\def\Re{{\operatorname{Re}}}
\def\det{{{\operatorname{det}}}}
\newcommand{\K}{\mathcal{K}}

\newcommand{\N}{\mathbb{N}}
\newcommand{\C}{\mathcal{C}}
\newcommand{\R}{\mathbb{R}}
\newcommand{\T}{\mathbb{T}}
\newcommand{\Z}{\mathbb{Z}}
\newcommand{\Zp}{{\mathbb{Z}}_+}
\def\AF{{{\operatorname{AF}}}}

\def\OA{{{\mathcal{O}}_A}}
\def\OB{{{\mathcal{O}}_B}}
\def\OTA{{{\mathcal{O}}_{\tilde{A}}}}
\def\SOA{{{\mathcal{O}}_A}\otimes{\mathcal{K}}}
\def\SOB{{{\mathcal{O}}_B}\otimes{\mathcal{K}}}
\def\FA{{{\mathcal{F}}_A}}
\def\FB{{{\mathcal{F}}_B}}
\def\DA{{{\mathcal{D}}_A}}
\def\DB{{{\mathcal{D}}_B}}
\def\DZ{{{\mathcal{D}}_Z}}
\def\Ext{{{\operatorname{Ext}}}}
\def\Max{{{\operatorname{Max}}}}
\def\Per{{{\operatorname{Per}}}}
\def\PerB{{{\operatorname{PerB}}}}
\def\Homeo{{{\operatorname{Homeo}}}}
\def\HA{{{\frak H}_A}}
\def\HB{{{\frak H}_B}}
\def\HSA{{H_{\sigma_A}(X_A)}}
\def\Out{{{\operatorname{Out}}}}
\def\Aut{{{\operatorname{Aut}}}}
\def\Ad{{{\operatorname{Ad}}}}
\def\Inn{{{\operatorname{Inn}}}}
\def\det{{{\operatorname{det}}}}
\def\exp{{{\operatorname{exp}}}}
\def\nep{{{\operatorname{nep}}}}
\def\sgn{{{\operatorname{sign}}}}
\def\cobdy{{{\operatorname{cobdy}}}}
\def\Ker{{{\operatorname{Ker}}}}
\def\ind{{{\operatorname{ind}}}}
\def\id{{{\operatorname{id}}}}
\def\supp{{{\operatorname{supp}}}}
\def\co{{{\operatorname{co}}}}
\def\scoe{{{\operatorname{scoe}}}}
\def\coe{{{\operatorname{coe}}}}

\def\S{\mathcal{S}}

\def\coe{{{\operatorname{coe}}}}
\def\scoe{{{\operatorname{scoe}}}}
\def\uoe{{{\operatorname{uoe}}}}
\def\ucoe{{{\operatorname{ucoe}}}}
\def\event{{{\operatorname{event}}}}


\bigskip

For an irreducible non-permutation matrix $A =[A(i,j)]_{i,j=1}^N$ with entries in $\{0,1\},$  
let us denote by $(X_A,\sigma_A)$
the associated one-sided  topological Markov shift.
It consists of the compact Hausdorff space $X_A$ of right one-sided sequences
$(x_n)_{n\in \N}$ of $x_n \in \{1,2,\dots,N\}$ 
satsifying  $A(x_n, x_{n+1}) =1, n\in \N$, 
and the continuous surjective map of the right one-sided shift $\sigma_A: X_A\longrightarrow X_A$
defined by
$\sigma_A((x_n)_{n\in \N}) = (x_{n+1})_{n\in \N}$. 
The topology of $X_A$ is defined by the relative topology of the infinite product topology
of $\{1,2,\dots,N\}^\N$.
Two-sided topological Markov shift $(\bar{X}_A,\bar{\sigma}_A)$
is similarly defined by replacing right one-sided sequences $(x_n)_{n\in \N}$
with two-sided sequences $(x_n)_{n\in \Z}$.  
See the text books \cite{Kitchens}, \cite{LM} for general theory of symbolic dynamical systems.
By the monumental introduction of Cuntz--Krieger algebras
$\OA$ by J. Cuntz and W. Krieger in \cite{CK}, 
lots of important and interesting interplay between 
topological Markov shifts  and the Cuntz--Krieger algebras
have been studied and clarified.
The Cuntz--Krieger algebra $\OA$ for the matrix $A$ 
is defined by the universal unital $C^*$-algebra generated by 
$N$ partial isometries $S_1,\dots, S_N$
satisfying the relations:
$1 = \sum_{j=1}^N S_j S_j^*$ and $S_i^* S_i =\sum_{j=1}^NA(i,j)S_j S_j^*, i=1,2,\dots, N.$
As in the paper \cite{CK}, 
the original space $X_A$ appears in the algebra $\OA$ 
as a maximal commutative $C^*$-subalgebra written $\DA$
generated by projections of the form
$S_{\mu_1}\cdots S_{\mu_m}S_{\mu_m}^*\cdots S_{\mu_1}^*$
for $\mu_1,\dots,\mu_m \in \{1,2,\dots, N\}$
that is canonically isomorphic to the commutative $C^*$-algebra $C(X_A)$ 
of complex valued continuous functions on $X_A$,
through the identification between the projection
$S_{\mu_1}\cdots S_{\mu_m}S_{\mu_m}^*\cdots S_{\mu_1}^*$ 
and the characteristic function $\chi_{U_{\mu_1\cdots\mu_m}}$
on $X_A$ for the cylinder set
$U_{\mu_1\cdots\mu_m} =\{ (x_n)_{n\in \N} \in X_A \mid x_1 =\mu_1,\dots, x_m = \mu_m\}$.
The gauge action $\rho^A$ on $\OA$ is defined by the automorphisms
$\rho^A_t, t \in \R/\Z =\T$ satisfying   
$\rho^A_t(S_j) = \exp{(2\pi\sqrt{-1}t)}\cdot S_j, j=1,2,\dots,N$. 
 Cuntz and Krieger themselves proved in \cite{CK}  
 the following fundamental results (A), (B) and (C) 
that show close relationship between topological dynamical systems and $C^*$-algebras.
Let us denote by $\K$ and $\C$ the $C^*$-algebra of compact operators on the separable 
infinite dimensional Hilbert space $\ell^2(\N)$ and its maximal commutative $C^*$-subalgebra 
consisting of diagonal operators, respectively.
Let $A, B$ be two irreducible non-permutation matrices with entries in $\{0,1\}$.

\medskip

(A) If one-sided topological Markov shifts $(X_A,\sigma_A)$ and $(X_B,\sigma_B)$ are
topologically conjugate, then there exists an isomorphism
$\Phi: \OA\longrightarrow \OB$ of $C^*$-algebras such that 
\begin{equation}\label{eq:onesided}
\Phi(\DA) = \DB
\quad \text{ and }
\quad
\Phi\circ\rho^A_t = \rho^B_t\circ\Phi, \quad t \in \T.
\end{equation}
 
(B) If two-sided topological Markov shifts $(\bar{X}_A,\bar{\sigma}_A)$ 
and $(\bar{X}_B,\bar{\sigma}_B)$ are
topologically conjugate, then there exists an isomorphism
$\bar{\Phi}: \OA\otimes\K\longrightarrow \OB\otimes\K$ 
of $C^*$-algebras such that 
\begin{equation}\label{eq:twosided}
\bar{\Phi}(\DA\otimes\C) = \DB\otimes\C
\quad 
\text{  and }
\quad 
\bar{\Phi}\circ(\rho^A_t\otimes\id) 
= (\rho^B_t\otimes\id)\circ\bar\Phi, \quad t \in \T.
\end{equation} 

(C) If two-sided topological Markov shifts $(\bar{X}_A,\bar{\sigma}_A)$ 
and $(\bar{X}_B,\bar{\sigma}_B)$ are flow equivalent,
then there exists an isomorphism
$\bar{\Phi}: \OA\otimes\K\longrightarrow \OB\otimes\K$ 
of $C^*$-algebras such that 
\begin{equation}\label{eq:floweq}
\bar{\Phi}(\DA\otimes\C) = \DB\otimes\C.
\end{equation}

\medskip

The converse implications of the above three implications for each  have been 
longstanding open problems.
H. Matui and the author in \cite{MMKyoto} 
showed that the converse implication of (C) holds 
(cf. \cite{BCW}, \cite{CEOR}, \cite{CRS}, \cite{MaTAMS2018}, \cite{MMETDS}, etc.).
T. M. Carlsen and J. Rout in \cite{CR} showed 
that the converse implication of (B) holds by groupoid technique
(cf. \cite{CEOR}, \cite{CRS}, \cite{MaDyn2019}, etc.). 
Concerning the implication (A),
the author in \cite{MaPAMS2017} showed that 
the condition that  there exists an isomorphism
$\Phi: \OA\longrightarrow \OB$ of $C^*$-algebras satisfying
\eqref{eq:onesided}
is equivalent to the condition  that 
$(X_A,\sigma_A)$ and $(X_B,\sigma_B)$ are eventually conjugate,
where
one-sided topological Markov shifts 
$(X_A,\sigma_A)$ and $(X_B,\sigma_B)$ are said to be {\it eventually conjugate}\/
if there exist a homeomorphism $h:X_A\longrightarrow X_B$ and a nonnegative integer
$K$ such that 
\begin{equation}\label{eq:event}
\begin{cases}
\sigma_B^K(h(\sigma_A(x))) 
& = \quad \sigma_B^{K+1}(h(x)), \qquad x \in X_A, \\ 
\sigma_A^K(h^{-1}(\sigma_B(y))) 
& = \quad \sigma_A^{K+1}(h^{-1}(y)), \qquad y \in X_B. 
\end{cases}
\end{equation}
If one may take the integer $K$ as zero, then the relations
\eqref{eq:event} reduce to the definition of topological conjugacy 
$h: X_A\longrightarrow X_B$.
In \cite{BC}, K. A. Brix and Carlsen found
 an example of irreducible topological Markov shifts  
$(X_A, \sigma_A)$ and $(X_B, \sigma_B)$
 that are eventually conjugate, but not topologically conjugate.  
 In the paper, they  characterized topological conjugacy
  of one-sided topological Markov shifts 
  not only in terms of their associated \'etale groupoids (\cite[Corollary 3.5 (ii)]{BC})
  but also in terms of their Cuntz--Krieger algebras (\cite[Corollary 3.5 (iii)]{BC})
  in the following way.  
Following \cite{BC}, 
let $\tau_A:\OA\longrightarrow \OA$ be a completely positive map defined 
by $\tau_A(Y) = \sum_{i,j=1}^N S_i Y S_j^*, Y \in \OA$.
Brix and Carlsen proved that 
$(X_A, \sigma_A)$ and $(X_B, \sigma_B)$ are topologically conjugate
if and only if there exists an isomorphism $\varPhi: \OA\longrightarrow \OB$
of $C^*$-algebras satisfying $\varPhi(\DA) = \DB$ and $\varPhi\circ\tau_A = \tau_B\circ\varPhi$
(\cite[Corollary 3.5 ]{BC}).
This gives rise to a characterization of one-sided topological conjugacy
of one-sided topological Markov shifts in terms of $C^*$-algebra.
We note that the gauge action also appears in their other characterization 
of one-sided topological conjugacy as in \cite[Theorem 3.3 (iv)]{BC}.

In this short paper, we will attempt to characterize one-sided topological conjugacy
of one-sided topological Markov shifts
in terms of Cuntz--Krieger algebras and its gauge actions with potentials
to compare with the characterization of eventual conjugacy as in \eqref{eq:onesided}.
For an integer valued continuous function
 $g \in C(X_A,\Z)$ on $X_A$,
 the action $\rho^{A,g}$  is defined by the automorphisms $\rho^{A,g}_t, t \in \T$ 
 on $\OA$
satisfying
$\rho^{A, g}_t(S_j) = \exp{(2\pi\sqrt{-1}t g)}\cdot S_j, j=1,2,\dots,N$.
The action $\rho^{A,g}$ was called a generalized gauge action in \cite{MaJOT2015}, \cite{MaMZ2017}.
In this paper, we call it the gauge action with potential $g$.  
We will prove the following theorem.

\begin{theorem}\label{thm:main}
Let $A, B$ be two irreducible non-permutation matrices with entries in $\{0,1\}$.
The following assertions are equivalent:
\begin{enumerate}
\renewcommand{\theenumi}{\roman{enumi}}
\renewcommand{\labelenumi}{\textup{(\theenumi)}}
\item The one-sided topological Markov shifts $(X_A,\sigma_A)$ and $(X_B,\sigma_B)$ 
are topologically conjugate.
\item There exists an isomorphism
$\Phi: \OA\longrightarrow \OB$ of $C^*$-algebras such that 
$\Phi(\DA) = \DB$ and 
\begin{equation}\label{eq:gauge}
\Phi\circ\rho^{A, f\circ h}_t = \rho^{B, f}_t\circ\Phi
\qquad \text{ for all } \quad f \in C(X_B,\Z), \,  t \in \T,
\end{equation}
\end{enumerate}
where
$h: X_A\longrightarrow X_B$ is a homeomorphism induced by 
$\Phi: \DA\longrightarrow \DB$ satisfying
$\Phi(a) = a\circ h^{-1}$ for $a \in \DA$
under the canonical identification between $\DA$ and $C(X_A)$. 
\end{theorem}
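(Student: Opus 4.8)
The strategy is to establish the equivalence by relating condition (ii) to the Brix--Carlsen characterization recalled in the excerpt, namely that $(X_A,\sigma_A)$ and $(X_B,\sigma_B)$ are topologically conjugate if and only if there is an isomorphism $\Phi:\OA\to\OB$ with $\Phi(\DA)=\DB$ intertwining the completely positive maps $\tau_A$ and $\tau_B$. Thus the real work is to show that, given $\Phi(\DA)=\DB$, the gauge-with-potential intertwining condition \eqref{eq:gauge} is equivalent to $\Phi\circ\tau_A=\tau_B\circ\Phi$. To set up the analysis I would first fix the homeomorphism $h:X_A\to X_B$ induced by $\Phi|_{\DA}$ and record how $\Phi$ acts on the spectral subspaces of the various actions.

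The key technical device is the family of \emph{spectral subspaces}. For the gauge action $\rho^A$, the fixed-point algebra $\FA=(\OA)^{\rho^A}$ is the standard AF-core, and $\tau_A$ shifts one unit of gauge degree: concretely $\tau_A$ maps the degree-$n$ spectral subspace into the degree-$(n{+}1)$ subspace, and $\tau_A(Y)=\sum_{i,j}S_iYS_j^*$ encodes multiplication by the generators on both sides. First I would express $\tau_A$ purely in terms of the ordinary gauge action $\rho^A$ together with the $\DA$-bimodule structure, so that the condition $\Phi\circ\tau_A=\tau_B\circ\Phi$ becomes a statement about how $\Phi$ respects gauge degree. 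The point of introducing potentials is that the single integer-valued function records, pointwise on $X_A$, the local ``length'' data that $\tau_A$ aggregates; the ordinary gauge action $\rho^A$ is the special case $\rho^{A,\mathbf 1}$ with the constant potential $1$.

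The heart of the argument is a dictionary between the two sides of \eqref{eq:gauge}. For $f\in C(X_B,\Z)$ the automorphism $\rho^{B,f}_t$ multiplies each generator $S_j$ by $\exp(2\pi\sqrt{-1}\,t f)$, so on a monomial $S_{\mu_1}\cdots S_{\mu_m}S_{\nu_n}^*\cdots S_{\nu_1}^*$ it acts by the phase determined by the integers $f$ evaluated along the prefixes of the source and range cylinders. I would show that the joint intertwining condition \eqref{eq:gauge}, ranging over \emph{all} $f\in C(X_B,\Z)$, is strong enough to force $\Phi$ to preserve these local length contributions exactly, and that under the identification $f\mapsto f\circ h$ this is precisely the compatibility that makes $\Phi$ commute with the shift-by-one maps $\tau_A,\tau_B$. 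The forward direction (i)$\Rightarrow$(ii) is the more direct half: starting from a genuine topological conjugacy $h$, the induced $\Phi$ of (A) already satisfies $\Phi(\DA)=\DB$ and intertwines the ordinary gauge actions, and I would upgrade this to \eqref{eq:gauge} by checking it on generators and using that $h$ is a conjugacy (not merely an eventual conjugacy), so the potential $f\circ h$ transforms correctly under $\Phi$.

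The main obstacle I anticipate is the reverse direction: deducing genuine topological conjugacy, rather than mere eventual conjugacy, from \eqref{eq:gauge}. Since \eqref{eq:onesided} alone only yields eventual conjugacy (and the Brix--Carlsen example shows this gap is real), the delicate step is to verify that quantifying over all potentials $f\in C(X_B,\Z)$ pins down the constant $K=0$ in \eqref{eq:event}. Concretely, I expect to test \eqref{eq:gauge} against carefully chosen locally constant $f$ (characteristic functions of cylinder sets) and read off, from the resulting phase identities on $\DA$ and on the generators, that $h$ must satisfy $\sigma_B\circ h=h\circ\sigma_A$ on the nose. Making this reduction rigorous — ensuring the potential data recovers the one-step length matching exactly, with no residual eventual slack — is where the argument will require the most care.
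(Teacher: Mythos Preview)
Your plan diverges from the paper's and contains a substantive error that undermines the proposed reduction.

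\textbf{The error.} You assert that $\tau_A$ ``shifts one unit of gauge degree'' and maps the degree-$n$ spectral subspace into degree $n{+}1$. It does not: since $\rho^A_t(S_i)=e^{2\pi i t}S_i$ and $\rho^A_t(S_j^*)=e^{-2\pi i t}S_j^*$, one has $\rho^A_t(\tau_A(Y))=\sum_{i,j}S_i\rho^A_t(Y)S_j^*=\tau_A(\rho^A_t(Y))$, so $\tau_A$ \emph{commutes} with the ordinary gauge action and preserves degree. Your ``dictionary'' between $\tau$-intertwining and gauge-with-potential intertwining is built on this degree-shift picture, so once it collapses you have no mechanism left to show that \eqref{eq:gauge} for all $f$ is equivalent, for the \emph{same} $\Phi$, to $\Phi\circ\tau_A=\tau_B\circ\Phi$. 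That equivalence is the entire content of your plan, and you have not given an argument for it that survives.

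\textbf{What the paper does instead.} The paper never passes through the Brix--Carlsen $\tau$-characterization. For (i)$\Rightarrow$(ii) it invokes \cite[Theorem~3.2]{MaMZ2017}: any continuous orbit equivalence $h$ yields an isomorphism $\Phi$ with $\Phi\circ\rho^{A,\varPsi_h(f)}_t=\rho^{B,f}_t\circ\Phi$, where $\varPsi_h(f)$ is the cocycle transfer \eqref{eq:Psihf}; when $h$ is an honest conjugacy one may take $k_1\equiv 0,\ l_1\equiv 1$, so $\varPsi_h(f)=f\circ h$ and \eqref{eq:gauge} follows. For (ii)$\Rightarrow$(i), the paper first uses that $\Phi(\DA)=\DB$ alone already makes $h$ a continuous orbit equivalence, then builds a second isomorphism $\Phi_1$ from \cite{MaMZ2017} satisfying the $\varPsi_h$-intertwining and agreeing with $\Phi$ on $\DA$; a unitary in $\DB$ (via \cite[Lemma~4.6]{MaJOT2000}) relates $\Phi$ and $\Phi_1$, which forces $\Phi$ itself to satisfy $\Phi\circ\rho^{A,\varPsi_h(f)}_t=\rho^{B,f}_t\circ\Phi$. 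Comparing with \eqref{eq:gauge} gives $\varPsi_h(f)=f\circ h$ for all $f$. Taking $f\equiv 1$ shows $h$ is an eventual conjugacy with some $K$, and a separate combinatorial lemma (testing the identity $\varPsi_h(f)=f\circ h$ on arbitrary $f$ at a fixed $x$, then arguing with the finite orbits $\{\sigma_B^i(h(x))\}$ and $\{\sigma_B^j(h(\sigma_A(x)))\}$ and the density of non-eventually-periodic points) forces $h\circ\sigma_A=\sigma_B\circ h$.

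Your final paragraph is closest in spirit to this last lemma, but note that the paper does not try to pin down $K=0$ directly; it shows $h(\sigma_A(x))=\sigma_B(h(x))$ first on non-eventually-periodic points and then by density, even while $K$ may remain positive.
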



\begin{proof}
(i) $\Longrightarrow$ (ii):
Suppose that there exists a topological conjugacy $h: X_A\longrightarrow X_B$
between $(X_A,\sigma_A)$ and $(X_B,\sigma_B)$.
It satisfies $h \circ\sigma_A = \sigma_B\circ h$.
As $h:X_A\longrightarrow X_B$ gives rise to a continuous orbit equivalence
between them in the sense of \cite{MaPacific},
a homomorphism $\varPsi_h: C(X_B,\Z) \longrightarrow C(X_A,\Z)$
of abelian groups is defined by setting 
\begin{equation}\label{eq:Psihf}
\varPsi_h(f)(x) = \sum_{i=0}^{l_1(x)}f(\sigma_B^i(h(x))) -
 \sum_{j=0}^{k_1(x)}f(\sigma_B^j(h(\sigma_A(x))),
\qquad f \in C(X_B,\Z), \, x \in X_A,
\end{equation} 
where $k_1(x),  l_1(x)$ are nonnegative integers
satisfying the equation
\begin{equation}
\sigma_B^{k_1(x)} (h(\sigma_A(x))) 
 = \sigma_B^{l_1(x)}(h(x)) \quad \text{ for } \quad 
x \in X_A.  \label{eq:orbiteq1x} 
\end{equation}
By \cite[Theorem 3.2]{MaMZ2017}, there exists 
 an isomorphism
$\Phi: \OA\longrightarrow \OB$ of $C^*$-algebras such that 
\begin{equation}\label{eq:Psih}
\Phi(\DA) = \DB
\quad
\text{ and }
\quad
 \Phi\circ\rho^{A, \varPsi_h(f)}_t = \rho^{B, f}_t\circ\Phi
\quad
\text{ for all } f \in C(X_B,\Z), \,  t \in \T.
\end{equation}
Now $h:X_A\longrightarrow X_B$ is a topological conjugacy, so that 
one may take the integers such as 
$k_1(x)= 0, l_1(x) = 1$ for all $x \in X_A$. 
Hence we know $\varPsi_h(f) = f\circ h$,
proving the assertion (ii).

(ii) $\Longrightarrow$ (i):
Assume that there exists  an isomorphism
$\Phi: \OA\longrightarrow \OB$ of $C^*$-algebras satisfying 
$\Phi(\DA) = \DB$ and the equalities \eqref{eq:gauge}.
Since 
the isomorphism
$\Phi: \OA\longrightarrow \OB$
satisfies $\Phi(\DA) = \DB$,
the homeomorphism
$h: X_A \longrightarrow X_B$ 
satisfying
$\Phi(a) = a\circ h^{-1}$ under the canonical identification
between $\DA$ and $C(X_A)$
gives rise to a continuous orbit equivalence between 
$(X_A,\sigma_A)$ and $(X_B,\sigma_B)$
(\cite[Proposition 5.3 and Proposition 5.5]{MaPacific}).
Hence as in  \cite[Theorem 3.2]{MaMZ2017},
the homeomorphism 
$h: X_A \longrightarrow X_B$ 
extends to the whole $C^*$-algebra 
$\OA$, so that 
there exists an  isomorphism
$\Phi_1: \OA\longrightarrow \OB$ of $C^*$-algebras such that 
\begin{equation}\label{eq:Phi1}
\Phi_1(\DA) = \DB
\quad
\text{ and }
\quad
 \Phi_1\circ\rho^{A, \varPsi_h(f)}_t = \rho^{B, f}_t\circ\Phi_1
\quad
\text{ for all } f \in C(X_B,\Z), \,  t \in \T,
\end{equation}
and $\Phi_1 (a) = a\circ h^{-1}$ for $a \in \DA$ 
under the canonical identification
between $\DA$ and $C(X_A)$.
The condition  $\Phi_1 (a) = a\circ h^{-1}$ for $a \in \DA$
follows from the construction of $\Phi_1:\OA \longrightarrow \OB$ in \cite{MaMZ2017}.  
Since the original isomorphism
$\Phi: \OA\longrightarrow \OB$ satisfies the condition
$\Phi(\DA) = \DB$ and $\Phi(a) = a\circ h^{-1}, a \in \DA$,
the restriction of the automorphism
$\Phi_1^{-1} \circ \Phi$ on $\DA$ is the identity.
By \cite[Lemma 4.6]{MaJOT2000},
one may find a unitary $U_1 \in \DB$ such that 
$\Phi_1 (S_i) = U_1 \Phi (S_i), i=1,2,\dots,N$,
where $S_i, i=1,2,\dots,N$ are the canonical generating partial 
isometries of $\OA$. 
By \eqref{eq:Phi1}, 
we have
\begin{equation*}
 \Phi_1\circ\rho^{A, \varPsi_h(f)}_t(S_i) = \rho^{B, f}_t\circ\Phi_1(S_i)
 \quad
\text{ for } f \in C(X_B,\Z), \,  t \in \T.
\end{equation*}
Since $\rho^{A, \varPsi_h(f)}_t(S_i) = \exp({2\pi\sqrt{-1}t \varPsi_h(f)}) \cdot S_i$
we have
\begin{equation*}
\Phi_1(\exp({2\pi\sqrt{-1}t \varPsi_h(f)} ))\cdot \Phi_1(S_i) = \rho^{B, f}_t(U_1 \Phi(S_i)).
\end{equation*}
As $\Phi_1(\exp({2\pi\sqrt{-1} t \varPsi_h(f)} )) = \Phi(\exp({2\pi\sqrt{-1}t \varPsi_h(f)} ))$
because $\exp({2\pi\sqrt{-1} t \varPsi_h(f)})\in \DA$,
we have
\begin{equation*}
\Phi(\exp({2\pi\sqrt{-1} t \varPsi_h(f)} ))\cdot U_1 \Phi(S_i) = U_1 \rho^{B, f}_t(\Phi(S_i))
\end{equation*}
and hence
\begin{equation*}
\Phi(\exp({2\pi\sqrt{-1} t \varPsi_h(f)} ))\cdot \Phi(S_i) = \rho^{B, f}_t(\Phi(S_i))
\end{equation*}
so that 
\begin{equation*}
\Phi(\rho^{A,\varPsi_h(f)}_t (S_i)) = \rho^{B, f}_t(\Phi(S_i)).
\end{equation*}
This implies that 
the equality
\begin{equation}\label{eq:PhiPsih}
\Phi\circ\rho^{A, \varPsi_h(f)}_t = \rho^{B, f}_t\circ\Phi \quad\text{ for all } f \in C(X_B,\Z) 
\end{equation}
hold.
By \eqref{eq:gauge} and \eqref{eq:PhiPsih},
we have
\begin{equation}\label{eq:varPsihfh}
\varPsi_h(f) = f\circ h \qquad \text{ for all } f \in C(X_B,\Z).
\end{equation}
In \eqref{eq:gauge}, by taking $f\equiv 1$,
we have 
$\Phi(\DA) = \DB$ and $\Phi\circ\rho^A_t = \rho^B_t\circ\Phi, t \in \T$.
Hence 
$(X_A,\sigma_A)$ and $(X_B,\sigma_B)$
are eventually conjugate via the homeomorphism 
$h:X_A\longrightarrow X_B$. 
Hence there exists 
a nonnegative integer $K$ satisfying \eqref{eq:event}.
The final step to complete the proof of the impication (ii) $\Longrightarrow$ (i)
is to show the following lemma.
\end{proof}  
\bigskip
\begin{lemma}
Suppose that $(X_A,\sigma_A)$ and $(X_B,\sigma_B)$ are eventually conjugate 
such that there exists 
a nonnegative integer $K$ satisfying \eqref{eq:event}. 
If the equality \eqref{eq:varPsihfh} holds,
then $h:X_A\longrightarrow X_B$ gives rise to a topological conjugacy between 
$(X_A,\sigma_A)$ and $(X_B,\sigma_B)$.
\end{lemma}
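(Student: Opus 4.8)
The plan is to show that the cohomological identity \eqref{eq:varPsihfh}, combined with the freedom in choosing the orbit cocycle that eventual conjugacy provides, forces the pointwise relation $h(\sigma_A(x)) = \sigma_B(h(x))$, which is exactly topological conjugacy.

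First I would exploit that eventual conjugacy with constant $K$ lets me take $k_1(x) = K$ and $l_1(x) = K+1$ for every $x \in X_A$ in the definition \eqref{eq:Psihf} of $\varPsi_h$, since \eqref{eq:event} is precisely the relation \eqref{eq:orbiteq1x} for these values. Writing $y = h(x)$ and $z = h(\sigma_A(x))$, the hypothesis $\varPsi_h(f) = f \circ h$ then reads
\[
\sum_{i=0}^{K+1} f(\sigma_B^i(y)) - \sum_{j=0}^{K} f(\sigma_B^j(z)) = f(y), \qquad f \in C(X_B,\Z),\ x \in X_A.
\]
The $i=0$ term on the left cancels the right-hand side, leaving
\[
\sum_{i=1}^{K+1} f(\sigma_B^i(y)) = \sum_{j=0}^{K} f(\sigma_B^j(z)) \qquad \text{for all } f \in C(X_B,\Z).
\]

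Next I would upgrade this scalar identity to an equality of multisets of points of $X_B$. Since $X_B$ is a compact, totally disconnected metrizable space, the characteristic functions $\chi_U$ of clopen subsets $U \subseteq X_B$ lie in $C(X_B,\Z)$ and suffice to separate any finite collection of distinct points. Applying the identity to $f = \chi_U$ for clopen sets $U$ shrinking to individual points shows that the multiplicity of each point in $\{\sigma_B^i(y) : 1 \le i \le K+1\}$ equals its multiplicity in $\{\sigma_B^j(z) : 0 \le j \le K\}$; hence these two multisets coincide. In particular $z = \sigma_B^{i_0}(y)$ for some $1 \le i_0 \le K+1$.

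The crux — and the step I expect to be the main obstacle — is passing from this multiset equality to the exact identity $z = \sigma_B(y)$, because when the orbit of $y$ is periodic the multiset condition alone need not determine $i_0$ (for instance, when the period divides $K+1$ the multiset is invariant under the shift). I would circumvent this by restricting to the set of those $x$ for which $y = h(x)$ is not eventually periodic; this set is dense, since the eventually periodic points of $X_B$ form a countable set while $X_B$ is perfect, and $h$ is a homeomorphism. For such $x$ the forward iterates $\sigma_B^i(y)$, $i \ge 0$, are pairwise distinct, so the equality $\{\sigma_B^i(y) : 1 \le i \le K+1\} = \{\sigma_B^{i_0+j}(y) : 0 \le j \le K\}$ forces the index sets $\{1,\dots,K+1\}$ and $\{i_0,\dots,i_0+K\}$ to agree, whence $i_0 = 1$ and $z = \sigma_B(y)$. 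Finally I would observe that the maps $x \mapsto h(\sigma_A(x))$ and $x \mapsto \sigma_B(h(x))$ are continuous on $X_A$ and coincide on this dense set, so they coincide everywhere; thus $h \circ \sigma_A = \sigma_B \circ h$, and since $h$ is a homeomorphism it is a topological conjugacy.
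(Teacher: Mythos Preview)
Your proof is correct and follows essentially the same route as the paper: reduce $\varPsi_h(f)=f\circ h$ with the eventual-conjugacy cocycle to an equality of finite orbit-segment multisets, conclude $h(\sigma_A(x))=\sigma_B(h(x))$ on the aperiodic part, and extend by density and continuity. The only cosmetic differences are that the paper also cancels the top terms via $\sigma_B^{K+1}(h(x))=\sigma_B^{K}(h(\sigma_A(x)))$ before comparing multisets, and it justifies density by quoting that continuous orbit equivalence preserves eventually periodic points, whereas your argument (countable eventually periodic set, $X_B$ perfect, $h$ a homeomorphism) is slightly more self-contained.
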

\begin{proof}
Now the nonnegative integer $K$ satisfies \eqref{eq:event},
so that we have by \eqref{eq:Psihf}
\begin{equation*}
\varPsi_h(f)(x) = \sum_{i=0}^{K+1}f(\sigma_B^i(h(x))) -
 \sum_{j=0}^{K}f(\sigma_B^j(h(\sigma_A(x)))),
\qquad f \in C(X_B,\Z), \, x \in X_A.
\end{equation*} 
If $K=0$, the homeomorphism 
$h:X_A\longrightarrow X_B$ gives rise to a topological conjugacy between 
$(X_A,\sigma_A)$ and $(X_B,\sigma_B)$.
Hence we assume that $K \ge 1$.

By the condition \eqref{eq:varPsihfh} together with
the equality
$\sigma_B^{K+1}(h(x)) = \sigma_B^K(h(\sigma_A(x)))$, 
we see the equality
\begin{equation}\label{eq:fk}
\sum_{i=1}^{K}f(\sigma_B^i(h(x))) =
 \sum_{j=0}^{K-1}f(\sigma_B^j(h(\sigma_A(x)))),
\qquad f \in C(X_B,\Z), \, x \in X_A
\end{equation} 
holds.
For a fixed $x \in X_A$, we put
$y=\sigma_B(h(x)), \, w= h(\sigma_A(x))$
so the we obtain the equalities 
$\sigma_B^K(y) = \sigma_B^K(w)
$
and
\begin{equation}\label{eq:fK2}
\sum_{i=0}^{K-1}f(\sigma_B^i(y)) =
 \sum_{j=0}^{K-1}f(\sigma_B^j(w)),
\qquad f \in C(X_B,\Z).
\end{equation} 
Put $y(j) = \sigma_B^j(y), w(j) = \sigma_B^j(w), j=0,1,\dots,K-1$
and
$$
Y_0 = \{ y(0), y(1), \dots, y(K-1)\}, \qquad
W_0 = \{ w(0), w(1), \dots, w(K-1)\}. 
$$
By \eqref{eq:fK2}, we have
\begin{equation}\label{eq:fk3}
\sum_{y(i)\in Y_0}f(y(i)) =
 \sum_{w(j)\in W_0}f(w(j)),
\qquad f \in C(X_B,\Z).
\end{equation} 
If $Y_0 \cap W_0 =\emptyset$,
one may find $f_0\in C(X_B,\Z)$ such that 
$$
f_0(y(i)) =1,\qquad
f_0(w(i) )=0,\quad \text{ for all } i=0, 1,\dots, K-1,
$$ 
a contradiction to \eqref{eq:fK2} unless $K=0$.
Hence 
$Y_0 \cap W_0 \ne \emptyset$. 
Take $i_0, j_0\in \{0,1,\dots,K-1\}$
such that $y(i_0) = w(j_0)$.
We put 
$$
Y_1 = Y_0 \backslash \{y(i_0)\}, \qquad
W_1 = W_0 \backslash \{w(j_0)\}
$$
so that we have
\begin{equation}\label{eq:fk4}
\sum_{y(i)\in Y_1}f(y(i)) =
 \sum_{w(j)\in W_1}f(w(j)),
\qquad f \in C(X_B,\Z).
\end{equation} 
Inductively we finally know that  
 $Y_0 = W_0$ unless $K=0$.
Hence we may find $p, q \in \{ 0,1,\dots, K-1\}$
such that 
$y=\sigma_B^p(w), w=\sigma_B^q(y)$.
If $q=0$, then we have 
$h(\sigma_A(x)) = \sigma_B(h(x))$.
If $q \ne 0$, 
we have $ y= \sigma_B^{p+q}(y)$
and hence $y$ is periodic.
Therefore we conclude that 
the equality 
$h(\sigma_A(x)) = \sigma_B(h(x))$
holds for $x \in X_A$ such that 
$y=\sigma_B(h(x))$ is not periodic.
A point $x \in X_A$ is said to be eventually periodic
if $\sigma_A^L(x)$ is periodic for some nonnegative integer $L$.
The set of non-eventually periodic points 
is dense in the topological Markov shift for an irreducible non-permutation matrix.
Since a continuous orbit equivalence preserves the set of 
eventually periodic points,
we know that  the equality 
$h(\sigma_A(x)) = \sigma_B(h(x))$
holds for all $x \in X_A$. 
\end{proof}

\begin{remark}\label{re:remark1}
The equality \eqref{eq:gauge} is equivalent to the following equality
 \begin{equation}\label{eq:gaugeg}
\Phi\circ\rho^{A, g}_t = \rho^{B, g\circ h^{-1}}_t\circ\Phi
\qquad \text{ for all } \quad g \in C(X_A,\Z), \,  t \in \T.
\end{equation}
\end{remark}

Let $A, B$ be  irreducible, non-permutation matirces with entries in $\{0,1\}.$
As in \cite{MaPacific}, 
one-sided topological Markov shifts $(X_A, \sigma_A)$ 
and $(X_B, \sigma_B)$ are 
said to be {\it continuously orbit equivalent}\/ 
if there exist nonnegative integer valued continuous functions
$k_1, l_1$ on $X_A$ and $k_2, l_2$ on $X_B$ such that  
\begin{align}
\sigma_B^{k_1(x)} (h(\sigma_A(x))) 
& =  \sigma_B^{l_1(x)}(h(x)), \qquad x \in X_A, \label{eq:coe1}\\ 
\sigma_A^{k_2(y)}(h^{-1}(\sigma_B(y))) 
& = \sigma_A^{l_2(y)}(h^{-1}(y)), \qquad y \in X_B. \label{eq:coe2}
\end{align}
If one may take $k_1 \equiv 0, l_1 \equiv 1, k_2 \equiv 0, l_2 \equiv 1,$ 
then the above equalities \eqref{eq:coe1} and \eqref{eq:coe2} 
reduce to the definition that 
$(X_A, \sigma_A)$ and $(X_B, \sigma_B)$ are topologically conjugate.
If one may take $k_1 \equiv K, l_1 \equiv K+1, k_2 \equiv K, l_2 \equiv K+1$
for some constant nonnegative integer $K$, 
then the above equalities \eqref{eq:coe1} and \eqref{eq:coe2} 
reduce to the definition that $(X_A, \sigma_A)$ and $(X_B, \sigma_B)$ 
are eventually conjugate.
If one may take $l_1 - k_1 = 1 + b_1 -b_1 \circ \sigma_A$
and $ l_2- k_2 = 1 + b_2 - b_2\circ \sigma_B$
for some integer valued continuous functions 
$b_1: X_A\longrightarrow \Z$ and $b_2: X_B\longrightarrow \Z$,
respectively, 
then the above equalities \eqref{eq:coe1} and \eqref{eq:coe2} 
reduce to the definition that $(X_A, \sigma_A)$ and $(X_B, \sigma_B)$ 
are {\it strongly continuous orbit equivalent}\/ (\cite{MaJOT2015}).
The continuous orbit equivalence between  
$(X_A, \sigma_A)$ and $(X_B, \sigma_B)$  
is completely characterized by the condition that there exists an isomorphism
$\Phi:\OA\longrightarrow \OB$ satisfying $\Phi(\DA) = \DB$.
Take a homeomorphism $h:X_A\longrightarrow X_B$
such that $\Phi(a) = a \circ h^{-1}$ for $a \in \DA$.
In particular, we see that $\Phi(g) = g \circ h^{-1}$ for $g \in C(X_A,\Z).$
We finally summarize characterization of these subequivalence relations of continuous orbit equivalence 
in one-sided topological Markov shifts in the following way.
\begin{corollary}[{Theorem \ref{thm:main} and \cite[Corollary 3.5]{MaMZ2017}, see also  \cite[Theorem 1.5]{MaPAMS2017}, \cite[Theorem 6.7]{MaJOT2015}}]
Let $\Phi: \OA\longrightarrow \OB$ be an isomorphism of $C^*$-algebras satisfying $\Phi(\DA) = \DB$.
Let $h:X_A\longrightarrow X_B$ be the homeomorphism satisfying $\Phi(a) = a\circ h^{-1}$ for $a \in \DA$. 
\begin{enumerate}
\renewcommand{\theenumi}{\roman{enumi}}
\renewcommand{\labelenumi}{\textup{(\theenumi)}}
\item 
The homeomorphism $h:X_A\longrightarrow X_B$ gives rise to a topological conjugacy between  
$(X_A,\sigma_A)$ and $(X_B,\sigma_B)$ 
 if and only if 
\begin{equation}\label{eq:Phitopcong}
\Phi\circ\rho^{A, g}_t = \rho^{B, \Phi(g)}_t\circ\Phi
\quad\text{ for all }\quad
g \in C(X_A,\Z), \,  t \in \T.
\end{equation}
\item 
The homeomorphism $h:X_A\longrightarrow X_B$ gives rise to an eventual conjugacy between  
$(X_A,\sigma_A)$ and $(X_B,\sigma_B)$ 
 if and only if 
\begin{equation}\label{eq:Phieventcong}
\Phi\circ\rho^{A}_t = \rho^{B}_t\circ\Phi,
\qquad t \in \T.
\end{equation}
\item 
The homeomorphism $h:X_A\longrightarrow X_B$ gives rise to a 
strongly continuous orbit equivalence between  
$(X_A,\sigma_A)$ and $(X_B,\sigma_B)$ 
  if and only if 
there exists a unitary one-cocycle $v_t \in \DB$ for the gauge action 
$\rho^B$ such that 
\begin{equation}\label{eq:Phiscoe}
\Phi\circ\rho^{A}_t = \Ad(v_t) \circ \rho^{B}_t\circ\Phi,
\qquad  t \in \T.
\end{equation}
\end{enumerate}
\end{corollary}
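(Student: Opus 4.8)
The plan is to read off all three equivalences from one common backbone, combined with the rigidity theorems already recorded in the literature. The backbone is the identity \eqref{eq:PhiPsih}, that is, $\Phi\circ\rho^{A,\varPsi_h(f)}_t=\rho^{B,f}_t\circ\Phi$ for all $f\in C(X_B,\Z)$, whose derivation in the proof of Theorem \ref{thm:main} uses only the standing hypothesis $\Phi(\DA)=\DB$. Indeed, that hypothesis forces $h$ to be a continuous orbit equivalence with $\Phi(g)=g\circ h^{-1}$ for $g\in C(X_A,\Z)$ (\cite[Proposition 5.3 and Proposition 5.5]{MaPacific}); one then builds an auxiliary isomorphism $\Phi_1$ from \cite[Theorem 3.2]{MaMZ2017}, observes that $\Phi$ and $\Phi_1$ agree on $\DA$, writes $\Phi_1(S_i)=U_1\Phi(S_i)$ for a diagonal unitary $U_1\in\DB$ (\cite[Lemma 4.6]{MaJOT2000}), and transfers the intertwining from $\Phi_1$ to $\Phi$. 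The transfer is clean because the potentials $\exp(2\pi\sqrt{-1}t\,\varPsi_h(f))$ lie in $\DA$, where $\Phi$ and $\Phi_1$ coincide, and $U_1$ commutes with their common image in the abelian algebra $\DB$ (cf.\ \cite[Corollary 3.5]{MaMZ2017}). Crucially, \eqref{eq:gauge} is not used in this derivation, so the backbone is available for the given $\Phi$ in every part of the corollary.

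For (i) I would combine this backbone with Remark \ref{re:remark1} and the identity $\Phi(g)=g\circ h^{-1}$. Writing $g=f\circ h$ turns \eqref{eq:Phitopcong} into $\Phi\circ\rho^{A,f\circ h}_t=\rho^{B,f}_t\circ\Phi$; comparing this with the backbone identity and using that $\rho^{A,g}=\rho^{A,g'}$ forces $g=g'$ for integer-valued $g,g'$, the condition \eqref{eq:Phitopcong} is seen to be equivalent to $\varPsi_h(f)=f\circ h$ for all $f$. By the Lemma this in turn holds if and only if $h$ is a topological conjugacy, which is assertion (i); equivalently one may simply invoke Theorem \ref{thm:main} once \eqref{eq:Phitopcong} has been rewritten as \eqref{eq:gauge} via Remark \ref{re:remark1}.

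Assertions (ii) and (iii) are then the constant-potential and cocycle specializations of the same picture, each matched to a published rigidity theorem. For (ii), specializing the potential to the constant function $1$ identifies \eqref{eq:Phieventcong} with the bare gauge-equivariance $\Phi\circ\rho^A_t=\rho^B_t\circ\Phi$ of \eqref{eq:onesided}, which by \cite[Theorem 1.5]{MaPAMS2017} holds if and only if $h$ is an eventual conjugacy. For (iii), the unitary one-cocycle $v_t\in\DB$ in \eqref{eq:Phiscoe} is precisely the datum encoding a coboundary $l_1-k_1=1+b_1-b_1\circ\sigma_A$ (and its counterpart on $X_B$) in the definition of strongly continuous orbit equivalence, and the equivalence with \eqref{eq:Phiscoe} is \cite[Theorem 6.7]{MaJOT2015}; here too the comparison by a diagonal unitary lets one pass between the constructed isomorphism and the given $\Phi$.

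The step I expect to be the main obstacle is the passage, inside (i), from the cocycle identity $\varPsi_h(f)=f\circ h$ to the genuinely pointwise relation $h\circ\sigma_A=\sigma_B\circ h$. This is the content of the Lemma and is not formal: after reducing to the eventually-conjugate case (constant $K$ from \eqref{eq:event}) one must run the combinatorial argument showing $Y_0=W_0$, which yields $h(\sigma_A(x))=\sigma_B(h(x))$ only at points whose image is non-periodic, and then use density of the non-eventually-periodic points for an irreducible non-permutation matrix together with the fact that continuous orbit equivalence preserves the eventually periodic points to spread the equality over all of $X_A$. For (iii) the analogous subtle point, handled in \cite{MaJOT2015}, is verifying that the abstract cocycle $v_t$ really descends to an integer-valued coboundary $b_1$ rather than merely to a continuous family of diagonal unitaries.
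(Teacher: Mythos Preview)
Your proposal is correct and takes essentially the same approach as the paper: both derive the identity \eqref{eq:PhiPsih} from $\Phi(\DA)=\DB$ alone via the auxiliary isomorphism $\Phi_1$ and the diagonal unitary $U_1$, use it together with Remark \ref{re:remark1} to settle (i), and defer (ii) and (iii) to the cited literature with the same diagonal-unitary transfer. The only differences are cosmetic: the paper invokes \cite[Corollary 3.5 and Theorem 3.3]{MaMZ2017} for (ii) and (iii) where you cite \cite{MaPAMS2017} and \cite{MaJOT2015}, and handles the only-if directions case by case rather than through your unified ``backbone'' formulation.
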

\begin{proof}
(i)
The if part follows from Theorem \ref{thm:main} (ii) $\Longrightarrow$ (i) and its proof
 by noticing Remark \ref{re:remark1}.
We will show the only if part.
Suppose that $h:X_A\longrightarrow X_B$ is a topological conjugacy between  
$(X_A,\sigma_A)$ and $(X_B,\sigma_B)$.
By Theorem \ref{thm:main} (i) $\Longrightarrow$ (ii) and its proof,
one may find an isomorphism $\Phi_1: \OA\longrightarrow \OB$
of $C^*$-algebras such that 
$\Phi_1(\DA) = \DB$, $\Phi_1(a) = a\circ h^{-1}$ for $a \in \DA$ and
\begin{equation}\label{eq:corPhi1}
\Phi_1\circ\rho^{A, g}_t = \rho^{B, \Phi_1(g)}_t\circ\Phi_1
\quad\text{ for all }\quad
g \in C(X_A,\Z), \,  t \in \T.
\end{equation}
Hence $\Phi_1$ coincides with $\Phi$ on the subalgebra $\DA$.
By using a similar argument to the proof of Theorem \ref{thm:main} (ii) $\Longrightarrow$ (i),
one may find a unitary $U_1$ in $\DB$ such that    
$\Phi_1 (S_i) = U_1 \Phi (S_i), i=1,2,\dots,N$,
where $S_i, i=1,2,\dots,N$ are the canonical generating partial 
isometries of $\OA$, so that 
we have
$\Phi\circ\rho^{A, g}_t = \rho^{B, \Phi_1(g)}_t\circ \Phi
$ 
by the same argument as the one obtained from \eqref{eq:Phi1}
to \eqref{eq:PhiPsih}.
As  
$\Phi_1(g) = \Phi(g)$, 
we conclude the equality
\eqref{eq:Phitopcong}.

(ii), (iii)
The  if parts  of (ii) and (iii) 
follow from 
\cite[Corollary 3.5 (i)]{MaMZ2017} and 
\cite[Corollary 3.5 (ii)]{MaMZ2017} 
(see also \cite[Theorem 3.3 (i)]{MaMZ2017} and 
          \cite[Theorem 3.3 (ii)]{MaMZ2017})
and their proofs, respectively.
The only if parts follow from 
\cite[Theorem 3.3 (i)]{MaMZ2017} and \cite[Theorem 3.3 (ii)]{MaMZ2017}
and their proofs, respectively,
by using a similar argument to the only if part of the above proof (i).

%
 \end{proof}

A generalization of Theorem \ref{thm:main} to more general subshifts treated in the paper
\cite{MaPre2020} will be studied in a forthcoming paper \cite{MaPre2020c}.
\bigskip

{\it Acknowledgment:}
The author would like to deeply thank the referee for his/her careful reading the first version of the paper
and various helpful comments in the presentation of the paper.
Especially the referee kindly pointed out an error of the formulation of Corollary 4 in the first version,
so that the statement of Corollary 4 is corrected.
This work was supported by JSPS KAKENHI 
Grant Number 19K03537.

\end{document}